\theoremstyle{plain}
\theoremstyle{definition}
\newtheorem{theo}{Theorem}
\newtheorem{defi}[theo]{Definition} 
\newtheorem{prop}[theo]{Proposition}
\newtheorem{lem}[theo]{Lemma}
\theoremstyle{remark}
\newcommand{\ensemblenombre }[1]{\mathbb{#1}}
\newcommand{\N}{\ensemblenombre{N}}
\newcommand{\Z}{\ensemblenombre{Z}}
\newcommand{\C}{\ensemblenombre{C}}
\newcommand{\abs}[1]{\left\lvert#1\right\rvert}
\title{\textbf{Roots of Chebyshev Polynomials: \\ a purely algebraic approach}}
\author{Lionel Ponton \\ \texttt{lionel.ponton@gmail.com}}
\date{}
\begin{document}
\maketitle

\begin{abstract}
	By using purely algebraic tools, we establish well-known properties of roots of Chebyshev polynomials. Especially, we show that these zeros are simple and lie in $(-1,1)$ and we prove in two ways that they are mostly irrational.
\end{abstract}

\medskip

\textbf{Mathematics Subject Classification (2010).} Primary: 12D10, 11J72, 11B83.

\medskip

\textbf{Keywords.} Chebychev polynomials, location of zeros, irrationality.

\section{Introduction}

The Chebyshev polynomials form one of the most famous and classical family of polynomials. They are related to many others subjects such as orthogonal polynomials, Jacobi polynomials, sine and cosine functions and approximation theory and may be define in many ways: recurrence relation, differential equation, trigonometric relation, generating function, etc.

In  approximation theory, the roots of these polynomials have a key role in connection with the Runge's phenomenon \cite[p. 143-144]{MH03}. According to the orthogonality of Chebyshev polynomials \cite[p. 16]{Sny66}, it is well-know that these zeros are simple and lie in the open interval $(-1,1)$ \cite[p. 7]{Sny66}. Moreover, the trigonometric relations satisfied by Chebyshev polynomials lead to some explicit expressions of these roots of the form $\cos(r\pi)$ where $r$ is a rational number \cite[p. 14]{Sny66}. Since the arithmetical properties of this kind of numbers are  well-know \cite[Th. 6.16, p. 308]{NZM91}, it is easy to derive that these roots are mostly irrational \cite[Th. 1]{Pon18}. 

We can note that, even if Chebyshev polynomials are algebraic objects, they are mainly study using calculus and trigonometry. In this paper, we adopt the following point of view: building on an algebraic definition of the Chebyshev polynomials, we derive all the above properties of their roots using only algebraic tools. More precisely, in section 2, we show that all the zeros of Chebyshev polynomials are real and lie in $(-1,1)$ by means of recursive sequences and very basic computation on complex numbers. Then, in section 3, we establish that these roots are simple using polynomial arithmetic. Finally, in section 4, on the basis of explicit expressions of Chebychev polynomials, we show that their zeros are mostly irrational in two ways: first, by using their expression directly; then, by means of shifted Chebychev polynomials. Note that, even if all the properties of Chebyshev polynomials stated and used in the following are already known, we prove each of them both to provide a self-contained article, and to ensure that it only use algebraic tools.

\bigskip

In that respect, we start with the following definition.

\begin{defi}
	The Chebyshev polynomials of first kind are defined by the recurrence relation:
	\[\begin{cases} T_0=1,~T_1=X, \\ \forall n\in\N,~T_{n+2}=2XT_{n+1}-T_n \end{cases} \]
	and the Chebyshev polynomials of second kind are defined by the recurrence relation:
	\[\begin{cases} U_0=1,~U_1=2X, \\ \forall n\in\N,~U_{n+2}=2XU_{n+1}-U_n \end{cases}\]
	
	Throughout this article, we designate $\N$ as the set of nonnegative integers, $\N^*$ as the set of positive integers, $\Z$ as the set of rational integers and $\C$ as the set of complex numbers.
	
	It is easy to check by double induction that, for all $n\in\N$, $T_n$ and $U_n$ are in $\Z[X]$, have degree $n$ and are even or odd according to $n$ is even or odd. 
\end{defi}

\section{The roots of Chebyshev polynomials lie in $(-1,1)$}

\begin{lem} For all $n\in\N$, $T_n(1)=1$ and $U_n(1)=n+1$.
\label{lem_value_in_1}
\end{lem}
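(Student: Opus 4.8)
The plan is to prove both identities simultaneously by strong induction on $n$, since the two families share the same recurrence $P_{n+2}=2XP_{n+1}-P_n$ and differ only in their initial conditions. The key observation is that evaluating at $X=1$ collapses the recurrence into a purely numerical one: writing $t_n=T_n(1)$ and $u_n=U_n(1)$, the relation $T_{n+2}=2XT_{n+1}-T_n$ becomes $t_{n+2}=2t_{n+1}-t_n$, and similarly $u_{n+2}=2u_{n+1}-u_n$. So the real content is to solve these two linear recurrences with the given initial data.

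First I would verify the base cases directly from the definition: $t_0=T_0(1)=1$ and $t_1=T_1(1)=1$, while $u_0=U_0(1)=1$ and $u_1=U_1(1)=2$, which agree with the claimed formulas $1$ and $n+1$ at $n=0,1$. Next I would carry out the induction step. For the $T_n$ case, assuming $t_{n+1}=1$ and $t_n=1$, the recurrence gives $t_{n+2}=2\cdot 1-1=1$, closing the induction. For the $U_n$ case, assuming $u_{n+1}=n+2$ and $u_n=n+1$, the recurrence gives $u_{n+2}=2(n+2)-(n+1)=n+3=(n+2)+1$, which is exactly the formula at index $n+2$. Both induction steps are immediate arithmetic.

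The main point to be careful about is that these are second-order recurrences, so the induction requires two consecutive base cases and the inductive hypothesis must assume the formula holds at two consecutive indices $n$ and $n+1$ to deduce it at $n+2$; this is precisely the double induction already invoked in the Definition. There is no genuine obstacle here: once the evaluation at $1$ trivializes $2X$ into the constant $2$, each sequence satisfies a constant-coefficient linear recurrence whose solution is forced by the initial conditions. For completeness one could instead note that $\lambda=1$ is a double root of the characteristic equation $\lambda^2-2\lambda+1=(\lambda-1)^2=0$, so the general solution is $a+bn$; matching initial conditions yields $t_n=1$ (with $a=1,b=0$) and $u_n=n+1$ (with $a=1,b=1$), but the direct induction is the cleaner route in an elementary algebraic paper and I would present that.
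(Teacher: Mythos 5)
Your proof is correct and is exactly the ``obvious double induction'' that the paper invokes: evaluate the recurrence at $X=1$ to get $t_{n+2}=2t_{n+1}-t_n$ and $u_{n+2}=2u_{n+1}-u_n$, check the two base cases, and close each induction step by the immediate arithmetic $2\cdot 1-1=1$ and $2(n+2)-(n+1)=n+3$. The paper leaves these details to the reader, so your write-up simply makes its intended argument explicit.
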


\begin{proof} This follows from an obvious double induction on $n$.
\end{proof}

\begin{lem} \label{lem_linear_rec} Let $w\in\C\setminus\{-1,1\}$. We define two sequences $(t_n)$ and $(u_n)$ of complex numbers by the recurrence relations:
	\[\begin{cases} t_0=1,~t_1=w, \\ \forall n\in\N,~t_{n+2}=2wt_{n+1}-t_n \end{cases} \qquad \text{and} \qquad \begin{cases} u_0=1,~u_1=2w, \\ \forall n\in\N,~u_{n+2}=2wu_{n+1}-u_n \end{cases} \]
	Then, there exists a complex number $r$ such that $w=\frac{1}{2}(r+\frac{1}{r})$ and, for all $n\in\N$,
	\[t_n=\frac{r^{2n}+1}{2r^n} \qquad \text{and} \qquad u_n=\frac{r^{2n+2}-1}{r^n(r^2-1)}.\]
\end{lem}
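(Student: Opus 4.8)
The plan is to recognize that both $(t_n)$ and $(u_n)$ satisfy the same linear recurrence $x_{n+2}=2wx_{n+1}-x_n$, whose characteristic polynomial is $X^2-2wX+1$. First I would let $r$ be a root of this quadratic, which exists because $\mathbb{C}$ is algebraically closed. Since the constant term equals $1$, the product of the two roots is $1$, so the second root is $1/r$ and in particular $r\neq 0$. Writing that $r$ is a root, namely $r^2-2wr+1=0$, and dividing by $r$ gives $r+\frac1r=2w$, that is $w=\frac12\!\left(r+\frac1r\right)$, which is exactly the claimed relation between $w$ and $r$.

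Next I would exploit the hypothesis $w\notin\{-1,1\}$. If one had $r=1/r$, i.e.\ $r^2=1$, then $r=\pm1$ and hence $w=\frac12\!\left(r+\frac1r\right)=\pm1$, contradicting the hypothesis. Therefore $r\neq\pm1$; the two characteristic roots $r$ and $1/r$ are distinct, and moreover $r^2-1\neq0$, so all denominators occurring in the stated formulas are nonzero. This is the step where the excluded values $w=\pm1$ genuinely matter.

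With the characteristic roots distinct, every solution of the recurrence is of the form $x_n=Ar^n+Br^{-n}$, and $A,B$ are pinned down by the first two terms. For $(t_n)$ the conditions $A+B=1$ and $Ar+B/r=w=\frac12\!\left(r+\frac1r\right)$ give $A=B=\frac12$, whence $t_n=\frac12\!\left(r^n+r^{-n}\right)=\frac{r^{2n}+1}{2r^n}$. For $(u_n)$ the conditions $A+B=1$ and $Ar+B/r=2w=r+\frac1r$ give $A=\frac{r^2}{r^2-1}$ and $B=\frac{-1}{r^2-1}$, so that $u_n=\frac{r^{n+2}-r^{-n}}{r^2-1}=\frac{r^{2n+2}-1}{r^n(r^2-1)}$, after multiplying numerator and denominator by $r^n$.

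Alternatively, and perhaps more in keeping with the elementary spirit of the paper, I would simply postulate these two closed forms and verify them by induction: each matches its two initial values, and a one-line computation using $2w=r+\frac1r$ shows each satisfies the recurrence, so the formulas agree with $(t_n)$ and $(u_n)$ for all $n$. Either way, the only real obstacle is the bookkeeping around degenerate cases, ensuring $r\neq0$ so that $r^{-n}$ is meaningful and $r^2\neq1$ so that the expression for $u_n$ is well defined; both are guaranteed precisely by $w\neq\pm1$, and everything else is routine.
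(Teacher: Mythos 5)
Your proposal is correct and follows essentially the same route as the paper: take a root $r$ of the characteristic polynomial $X^2-2wX+1$, note that the other root is $\frac{1}{r}$ and that $w\neq\pm1$ forces $r\neq\frac{1}{r}$, then write the general solution $Ar^n+Br^{-n}$ and solve for the coefficients from the initial conditions. Your added care about $r\neq 0$, the nonvanishing of $r^2-1$, and the optional induction-based verification are fine refinements, but the argument is the same as the paper's.
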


\begin{proof} The characteristic polynomial of $(t_n)$ and $(u_n)$ is $P=X^2-2wX+1$. Let $r$ be a complex root of $P$. Since the constant term of $P$ is $1$, the other complex root of $P$ is $\frac{1}{r}$. Moreover, since $w\notin\{-1,1\}$, $r\neq \frac{1}{r}$. Thus, there exist complex numbers $\alpha$, $\beta$, $\gamma$ and $\delta$ such that, for all $n\in\N$, $t_n=\alpha r^n + \beta \frac{1}{r^n}$ and $u_n=\gamma r^n + \delta \frac{1}{r^n}$. Since the term of $X$ in $P$ is $-2w$, we have $w=\frac{1}{2}(r+\frac{1}{r})$ and then  we deduce from $t_0=1$ and $t_1=w$ that $\alpha=\beta=\frac{1}{2}$ and  from $u_0=1$ and $u_1=2w$ that $\gamma=\frac{r^2}{r^2-1}$ and $\delta=\frac{-1}{r^2-1}$ which yields the result.
\end{proof}

\begin{theo} \label{theo_roots_intervalle} Let $n\in\N^*$. All the complex roots of $T_n$ and $U_n$ are real and lie in $(-1,1)$.
\end{theo}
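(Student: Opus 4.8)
The plan is to reduce everything to the explicit formulas of Lemma~\ref{lem_linear_rec} together with a single modulus computation. First I would dispose of the endpoints $\pm 1$. By Lemma~\ref{lem_value_in_1} we have $T_n(1)=1\neq 0$ and $U_n(1)=n+1\neq 0$, and since $T_n$ and $U_n$ have the parity of $n$ it follows that $T_n(-1)=(-1)^n\neq 0$ and $U_n(-1)=(-1)^n(n+1)\neq 0$. Hence neither $1$ nor $-1$ is a root of $T_n$ or $U_n$, and it remains to treat a putative root $w\in\C\setminus\{-1,1\}$.

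The key observation is that, for such a $w$, the scalar sequences $(t_n)$ and $(u_n)$ of Lemma~\ref{lem_linear_rec} satisfy exactly the defining recurrences of $T_n$ and $U_n$ evaluated at $w$ (same initial data, same recursion), so that $T_n(w)=t_n$ and $U_n(w)=u_n$. Writing $w=\frac{1}{2}(r+\frac{1}{r})$ as in the lemma, I would translate the vanishing of $T_n(w)$ or $U_n(w)$ into a condition on $r$. From $t_n=\frac{r^{2n}+1}{2r^n}$, the equation $T_n(w)=0$ is equivalent to $r^{2n}=-1$; from $u_n=\frac{r^{2n+2}-1}{r^n(r^2-1)}$, the equation $U_n(w)=0$ is equivalent to $r^{2n+2}=1$. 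In either case, taking moduli yields $\abs{r}^{2n}=1$ (resp. $\abs{r}^{2n+2}=1$), and since $\abs{r}$ is a positive real number this forces $\abs{r}=1$.

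Finally I would extract the conclusion from $\abs{r}=1$. Since $r\bar r=\abs{r}^2=1$, we have $\frac{1}{r}=\bar r$, so $w=\frac{1}{2}(r+\bar r)=\re(r)$ is real. Moreover $\re(r)^2\le\re(r)^2+\im(r)^2=\abs{r}^2=1$, so $\abs{w}\le 1$, with equality only when $\im(r)=0$, i.e. $r=\pm 1$; but $r=\pm 1$ would give $w=\frac{1}{2}(r+\frac{1}{r})=\pm 1$, which has been excluded. Therefore $\abs{w}<1$, that is $w\in(-1,1)$. As every complex root of $T_n$ or $U_n$ is thus real and lies in $(-1,1)$, the theorem follows.

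I expect the main subtlety to be the bookkeeping around the excluded values rather than any hard estimate: one must verify at the outset that $\pm 1$ are genuinely not roots, so that Lemma~\ref{lem_linear_rec} legitimately applies to every actual root, and one must check that the denominators $2r^n$ and $r^n(r^2-1)$ are nonzero — the former because $r\neq 0$ (as $r\cdot\frac{1}{r}=1$), the latter because $w\neq\pm 1$ rules out $r^2=1$. Everything else is the short modulus computation above, which is exactly the ``very basic computation on complex numbers'' announced in the introduction.
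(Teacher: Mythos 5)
Your proposal is correct and follows essentially the same route as the paper: excluding $\pm1$ via Lemma~\ref{lem_value_in_1} and parity, identifying $T_k(w)$ and $U_k(w)$ with the sequences of Lemma~\ref{lem_linear_rec}, deducing $\abs{r}=1$ from $r^{2n}=-1$ (resp.\ $r^{2n+2}=1$), and concluding $w=\re(r)\in(-1,1)$. If anything, you are slightly more careful than the paper at the final step, where you make explicit that equality $\abs{w}=1$ would force $r=\pm1$ and hence $w=\pm1$, which was excluded at the outset.
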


\begin{proof} Let $w$ be a complex root of $T_n$. Due to Lemma \ref{lem_value_in_1} and to the parity of $T_n$, $w\notin\{-1,1\}$. Put, for every $k\in\N$, $t_k:=T_k(w)$. Then, $t_0=1$, $t_1=w$ and, for all $k\in\N$, $t_{k+2}=2wt_{k+1}-t_k$. Thus, by Lemma \ref{lem_linear_rec}, there is a complex number $r$ such that $w=\frac{1}{2}(r+\frac{1}{r})$ and, for all $k\in\N$, $T_k(w)=\frac{r^{2k}+1}{2r^k}$. Since $T_n(w)=0$, it follows that $r^{2n}+1=0$ and, in particular, $\abs{r}=1$. Then, $\overline{r}=\frac{\abs{r}^2}{r}=\frac{1}{r}$ and thus $w=\frac{1}{2}(r+\overline{r})=\text{Re}(r)$ is a real number. Moreover, $\abs{w}=\abs{\text{Re}(r)} \leqslant \abs{r}=1$ and we conclude that $w\in(-1,1)$. The proof for the roots of $U_n$ is exactly the same.
\end{proof}

\section{The roots of Chebyshev polynomials are simple}

\begin{lem} \label{lem_coprime} For all $n\in\N$, $T_n$ and $T_{n+1}$ are coprime.
\end{lem}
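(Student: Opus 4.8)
The plan is to argue by induction on $n$, exploiting the fact that the recurrence relation makes the greatest common divisor of two consecutive Chebyshev polynomials invariant. I would work in the ring $\Q[X]$, which is a Euclidean (hence principal ideal) domain, so that "coprime" unambiguously means that the greatest common divisor is a nonzero constant, equivalently that the two polynomials share no common factor of positive degree.

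The base case is immediate. Since $T_0=1$, the polynomials $T_0$ and $T_1=X$ are trivially coprime: any common divisor divides the unit $1$.

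For the inductive step, I would suppose that $T_n$ and $T_{n+1}$ are coprime. The crucial observation is that the defining recurrence $T_{n+2}=2XT_{n+1}-T_n$ can be rearranged as $T_n=2XT_{n+1}-T_{n+2}$. Consequently, any common divisor of $T_{n+1}$ and $T_{n+2}$ must also divide $T_n$, and hence divides both $T_n$ and $T_{n+1}$; by the induction hypothesis such a divisor is a nonzero constant. Therefore $T_{n+1}$ and $T_{n+2}$ are coprime, which completes the induction. Phrased differently, the rearranged recurrence shows that $\gcd(T_{n+1},T_{n+2})=\gcd(T_n,T_{n+1})$, so this gcd is constant along the sequence and equals $\gcd(T_0,T_1)=1$.

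I do not anticipate any serious obstacle: the result is a direct consequence of the recurrence combined with divisibility theory in $\Q[X]$. The only point deserving a little care is fixing the ambient ring and the precise meaning of coprimality at the outset; once one works in $\Q[X]$, the single way coprimality could fail is the existence of a common factor of positive degree, and the rearranged recurrence rules this out at once. This keeps the argument entirely algebraic, in keeping with the spirit of the paper, and avoids any appeal to the explicit roots.
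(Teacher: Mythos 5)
Your proof is correct and follows essentially the same route as the paper: both arguments rest on the observation that the rearranged recurrence $T_n = 2XT_{n+1} - T_{n+2}$ makes the gcd invariant along the sequence, reducing everything to $\gcd(T_0,T_1)=\gcd(1,X)=1$. The only cosmetic difference is that the paper proves the two-sided equality $D_n = D_{n+1}$ of monic gcds, whereas your induction needs just the one divisibility direction combined with the induction hypothesis; this is a minor streamlining, not a different method.
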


\begin{proof} For all $n\in\N$, we put $D_n$ the (monic) gcd of $T_n$ and $T_{n+1}$. Let $n\in\N$. By definition, $D_n$ divides $T_n$ and $T_{n+1}$ thus, since $T_{n+2}=2XT_{n+1}-T_n$, $D_{n}$ divides $T_{n+2}$ too. It follows that $D_{n}$ divides $D_{n+1}$. Similarly, since $T_{n}=2XT_{n+1}-T_{n+2}$, $D_{n+1}$ divides $T_n$ and thus it divides $D_n$. Since $D_n$ and $D_{n+1}$ are monic, we conclude that $D_n=D_{n+1}$. Then, $(D_n)$ is constant and, for all $n\in\N$, $D_n=D_0=\text{gcd}(1,X)=1$. 
\end{proof}

\begin{lem} \label{lem_links_T_n_U_n} For all $n\in\N$,
	\begin{align}
		&T_{n+1}=U_{n+1}-XU_{n} \label{eq_1} \\
		&T_{n+1}'=(n+1)U_n \label{eq_2} \\
		&T_{n+2}=XT_{n+1}-(1-X^2)U_n \label{eq_3} \\
		&(1-X^2)T_{n+1}'+(n+1)(XT_{n+1}-T_n)=0 \label{eq_4} \\
		&(n+1)T_{n+1}=XU_n-(1-X^2)U_n'. \label{eq_5} 
	\end{align}
\end{lem}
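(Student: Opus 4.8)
The plan is to prove \eqref{eq_1} first by the uniqueness of solutions to the common recurrence, and then to cascade the remaining four identities out of it by induction, algebraic rearrangement, and differentiation. First I would establish \eqref{eq_1}. Set $V_n:=U_{n+1}-XU_n$. Since $(U_n)$ satisfies $U_{k+2}=2XU_{k+1}-U_k$ and $X$ is constant with respect to the index, the shifted sequences $(U_{n+1})$ and $(U_n)$ obey the same recurrence, hence so does the combination $(V_n)$; the sequence $(T_{n+1})$ obviously does too. Two sequences governed by this second-order linear recurrence coincide as soon as they agree on two consecutive indices, so it suffices to check $V_0=T_1$ and $V_1=T_2$, which is immediate from the values of $U_0,U_1,U_2,T_1,T_2$. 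A one-line consequence I will record for later use is the equivalent form $T_{n+1}=XU_n-U_{n-1}$ (for $n\ge 1$), obtained by substituting $U_{n+1}=2XU_n-U_{n-1}$.

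Next, \eqref{eq_2} is the one genuinely delicate point, because the factor $n+1$ prevents $(T_{n+1}')$ from satisfying a constant-coefficient recurrence; I would prove it by induction. Differentiating $T_{n+2}=2XT_{n+1}-T_n$ gives $T_{n+2}'=2T_{n+1}+2XT_{n+1}'-T_n'$, and substituting the induction hypotheses $T_{n+1}'=(n+1)U_n$ and $T_n'=nU_{n-1}$ together with $U_{n+1}=2XU_n-U_{n-1}$ reduces the desired equality $T_{n+2}'=(n+2)U_{n+1}$ to exactly $T_{n+1}=XU_n-U_{n-1}$, which is the recorded form of \eqref{eq_1}. The base cases $n=0,1$ are checked directly.

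Identity \eqref{eq_3} I would obtain purely algebraically: from the recurrence, $T_{n+2}-XT_{n+1}=XT_{n+1}-T_n$, and expanding $XT_{n+1}-T_n$ via \eqref{eq_1} (using both $T_{n+1}=U_{n+1}-XU_n$ and $T_n=U_n-XU_{n-1}$) together with $U_{n+1}+U_{n-1}=2XU_n$ collapses it to $(X^2-1)U_n=-(1-X^2)U_n$. This simultaneously yields the auxiliary relation $XT_{n+1}-T_n=-(1-X^2)U_n$. Then \eqref{eq_4} is immediate: substitute $T_{n+1}'=(n+1)U_n$ from \eqref{eq_2} into $(1-X^2)T_{n+1}'$ and add $(n+1)$ times the auxiliary relation, so the two terms cancel.

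Finally, for \eqref{eq_5} I would differentiate \eqref{eq_3} written as $(1-X^2)U_n=XT_{n+1}-T_{n+2}$, replace $T_{n+1}'$ and $T_{n+2}'$ using \eqref{eq_2}, and eliminate $U_{n+1}$ through \eqref{eq_1}; the $U_n$- and $T_{n+1}$-coefficients then combine to give exactly $(1-X^2)U_n'=XU_n-(n+1)T_{n+1}$, which is \eqref{eq_5}. The main obstacle is thus \eqref{eq_2}, and the chief bookkeeping risk lies in the differentiation of \eqref{eq_3} for \eqref{eq_5}; everything else is routine once \eqref{eq_1} is in hand. In each case the small-index base values should be verified separately to cover the steps where $U_{n-1}$ is invoked.
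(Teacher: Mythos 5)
Your proof is correct, but it organizes the work differently from the paper. The paper proves \eqref{eq_1}, \eqref{eq_2} and \eqref{eq_3} each by a separate double induction (assuming the identity for $n$ and $n+1$, deducing it for $n+2$), and then obtains \eqref{eq_4} and \eqref{eq_5} by direct computation, exactly as you do. You, instead, dispose of \eqref{eq_1} by a structural argument --- both sides satisfy the same second-order linear recurrence over $\Z[X]$ and agree at two consecutive indices, hence coincide --- and you deduce \eqref{eq_3} purely algebraically from \eqref{eq_1} and the defining recurrence, via the auxiliary identity $XT_{n+1}-T_n=-(1-X^2)U_n$; so in your scheme only \eqref{eq_2} genuinely needs an induction. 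This buys economy and transparency: a single induction instead of three, and it makes visible that \eqref{eq_3} is not an independent fact but a consequence of \eqref{eq_1}, after which \eqref{eq_4} follows by the same cancellation as in the paper. The cost is the bookkeeping at small indices that you already flag: your reductions invoke $U_{n-1}$ (equivalently $T_n=U_n-XU_{n-1}$), which only makes sense for $n\geqslant 1$, so the cases $n=0$ (and $n=1$ for \eqref{eq_2}) must be checked by hand, whereas the paper's uniform inductions absorb them as ordinary base cases. Your treatment of \eqref{eq_5} coincides with the paper's: differentiate \eqref{eq_3}, substitute \eqref{eq_2} twice, and eliminate $U_{n+1}$ using \eqref{eq_1}.
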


\begin{proof} The first three equalities are showed by induction. All these ones are clear if $n=0$ and $n=1$ so we only make the induction step. In each case, we assume that the property is true for $n$ and $n+1$. 

For \eqref{eq_1}, we can write
		\begin{align*}
		T_{n+3}&=2XT_{n+2}-T_{n+1}=2X(U_{n+2}-XU_{n+1})-(U_{n+1}-XU_n) \\
		&=2XU_{n+2}-U_{n+1}-X(2XU_{n-1}-U_n)=U_{n+3}-XU_{n+2}
		\end{align*} 
		
For \eqref{eq_2}, we use \eqref{eq_1}:
		\begin{align*}T_{n+3}'=(2XT_{n+2}-T_{n+1})'&=2T_{n+2}+2XT_{n+2}'-T_{n+1}' \\
		&=2(U_{n+2}-XU_{n+1})+2X(n+2)U_{n+1}-(n+1)U_{n} \\
		&=(n+1)(2XU_{n+1}-U_n)+2U_{n+2}=(n+3)U_{n+2}.
		\end{align*}
			
And, finally, for \eqref{eq_3},  
		\begin{align*}
		T_{n+4}&=2XT_{n+3}-T_{n+2}=2X(XT_{n+2}-(1-X^2)U_{n+1})-(XT_{n+1}-(1-X^2)U_n) \\
		&=X(2XT_{n+2}-T_{n+1})-(1-X^2)(2XU_{n+1}-U_n) \\
		&=XT_{n+3}-(1-X^2)U_{n+2}
		\end{align*}
		Thus, the first three equalities are proved by induction.

For the last two ones, direct calculations succeed, using previous results. For \eqref{eq_4}, we write:
		\begin{align*}
		(1-X^2)T_{n+1}'&=(1-X^2)(n+1)U_n=(n+1)(Xt_{n+1}-T_{n+2}) \\
		&=(n+1)(XT_{n+1}-(2XT_{n+1}-T_n))=(n+1)(T_n-XT_{n+1})
		\end{align*}

For \eqref{eq_5}, we differentiate \eqref{eq_3}. It yields: $T_{n+2}'=T_{n+1}+XT_{n+1}'+2XU_n-(1-X^2)U_n'$. But, by \eqref{eq_2} and \eqref{eq_1}, $T_{n+2}'=(n+2)U_{n+1}=(n+2)(T_{n+1}+XU_n)$ and, by \eqref{eq_2}, $T_{n+1}'=(n+1)U_n$. Then,
\[(n+2)(T_{n+1}+XU_n)=T_{n+1}+X(n+1)U_n+2XU_n-(1-X^2)U_n\]
and thus $(n+1)T_{n+1}=XU_n-(1-X^2)U_n'$.
\end{proof}

\begin{theo} Let $n\in\N^*$. All the roots of $T_n$ and $U_n$ are simple.
\end{theo}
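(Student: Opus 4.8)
The plan is to use the elementary criterion that a root $w$ of a polynomial $P$ is simple if and only if $P(w)=0$ and $P'(w)\neq 0$, and to feed this criterion with the algebraic identities of Lemma \ref{lem_links_T_n_U_n} together with the coprimality of Lemma \ref{lem_coprime}. I would handle $T_n$ first and then reduce the case of $U_n$ to it; the order matters, as explained below.

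For $T_n$ with $n\in\N^*$, I would argue by contradiction: let $w$ be a root of $T_n$ and suppose $T_n'(w)=0$. Applying \eqref{eq_4} with $n$ replaced by $n-1$ yields $(1-X^2)T_n'+n(XT_n-T_{n-1})=0$. Evaluating this at $w$ and using $T_n(w)=0$ and $T_n'(w)=0$ collapses every term except $-n\,T_{n-1}(w)$, forcing $T_{n-1}(w)=0$. Then $w$ would be a common root of $T_{n-1}$ and $T_n$, contradicting Lemma \ref{lem_coprime}. Hence $T_n'(w)\neq 0$ and every root of $T_n$ is simple. I note that this step uses neither Theorem \ref{theo_roots_intervalle} nor the factor $1-w^2$, since setting $T_n'(w)=0$ kills that factor automatically.

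For $U_n$, the idea is to turn a hypothetical multiple root of $U_n$ into a multiple root of $T_{n+1}$, which the previous step has just excluded. So let $w$ be a root of $U_n$ with $U_n'(w)=0$. Evaluating \eqref{eq_5} at $w$ and using $U_n(w)=U_n'(w)=0$ gives $(n+1)T_{n+1}(w)=0$, hence $T_{n+1}(w)=0$; and \eqref{eq_2}, namely $T_{n+1}'=(n+1)U_n$, gives $T_{n+1}'(w)=(n+1)U_n(w)=0$. Thus $w$ is a multiple root of $T_{n+1}$, contradicting the simplicity of the roots of $T_{n+1}$ established above. Therefore $U_n'(w)\neq 0$ and every root of $U_n$ is simple.

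The only real subtlety is structural rather than computational, and it is the point I would be most careful about: one cannot attack $U_n$ directly by a coprimality statement the way one does for $T_n$, because the cleanest lever is the relation $U_n=\tfrac{1}{n+1}T_{n+1}'$ coming from \eqref{eq_2}, which presupposes that $T_{n+1}$ already has simple roots. Hence the correct sequencing is essential: I would prove the claim for all $T_m$ first, and only then exploit \eqref{eq_2} and \eqref{eq_5} to transfer it to $U_n$. Once the order is fixed, each case is a single substitution into the identities of Lemma \ref{lem_links_T_n_U_n}, with no remaining difficulty.
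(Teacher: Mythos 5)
Your proof is correct and follows essentially the same route as the paper: coprimality of consecutive Chebyshev polynomials (Lemma \ref{lem_coprime}) plus the identities of Lemma \ref{lem_links_T_n_U_n}, settling $T_n$ first and then transferring to $U_n$ via \eqref{eq_2} and \eqref{eq_5}. The only cosmetic difference is that for $T_n$ you invoke \eqref{eq_4} and argue by contradiction, whereas the paper combines \eqref{eq_3} with coprimality to get $U_{n-1}(w)\neq 0$ and then concludes $T_n'(w)=nU_{n-1}(w)\neq 0$ directly.
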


\begin{proof} Let $w$ be a root of $T_n$. By Lemma \ref{lem_coprime}, $T_n$ and $T_{n+1}$ are coprime thus $w$ is not a root of $T_{n+1}$. By \eqref{eq_3}, $T_{n+1}(w)=(1-w^2)U_{n-1}(w)$ thus $U_{n-1}(w)\neq 0$. But, by \eqref{eq_2}, $T_n'=nU_{n-1}$ thus $T_n'(w)\neq 0$. Then, $w$ is a simple root of $T_n$.
	
	Let now $w$ be a root of $U_n$. By \eqref{eq_2}, $T_{n+1}'(w)=0$. Since all the roots of $T_{n+1}$ are simple, $T_{n+1}(w)\neq 0$. But, by \eqref{eq_5}, $(n+1)T_{n+1}(w)=(w^2-1)U_n'(w)$ thus $U_n'(w)\neq 0$ and $w$ is a simple root of $U_n$. 
\end{proof}

\section{The roots of Chebyshev polynomials are mostly irrational}

\begin{prop} \label{prop_diff_equa} For all $n\in\N$, $T_n$ satisfies the differential equation
	\begin{equation}
	(1-X^2)T_n''-XT_n'+n^2T_n=0
	\label{eq_diff_equa}
	\end{equation}
\end{prop}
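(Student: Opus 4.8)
The plan is to differentiate the relation \eqref{eq_4} from Lemma \ref{lem_links_T_n_U_n}, since that identity already packages the first-order information about $T_{n+1}$ in a form that is one derivative away from the target. Writing \eqref{eq_4} for the index $n+1$, namely $(1-X^2)T_{n+1}'+(n+1)(XT_{n+1}-T_n)=0$, I would differentiate both sides and then reduce everything back to $T_{n+1}$ and its derivatives using the other identities of Lemma \ref{lem_links_T_n_U_n}. The goal is to eliminate $T_n'$ (and any stray $U_n$, $U_n'$) so that only $T_{n+1}$, $T_{n+1}'$, $T_{n+1}''$ survive.

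Concretely, I would first dispose of the trivial base cases $n=0$ and $n=1$ by direct substitution, since $T_0=1$ and $T_1=X$ make \eqref{eq_diff_equa} immediate. For $n\geqslant 1$ I set the index to $n+1$ and compute, differentiating
\[(1-X^2)T_{n+1}'+(n+1)XT_{n+1}-(n+1)T_n=0\]
to get
\[(1-X^2)T_{n+1}''-2XT_{n+1}'+(n+1)T_{n+1}+(n+1)XT_{n+1}'-(n+1)T_n'=0.\]
The only term not yet expressed through $T_{n+1}$ is $T_n'$. Here I would invoke \eqref{eq_2}, which gives $T_{n+1}'=(n+1)U_n$ and hence $T_n'=nU_{n-1}$; combined with \eqref{eq_3} read as $T_{n+1}=XT_n-(1-X^2)U_{n-1}$, this lets me rewrite $T_n'$ in terms of $T_{n+1}$ and $T_n$, and then \eqref{eq_4} itself (at index $n$) removes the remaining $T_n$ in favour of $T_{n+1}'$ and $T_{n+1}$.

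The step I expect to be the main obstacle is precisely this bookkeeping: the intermediate expression mixes the indices $n$ and $n+1$ together with $U_{n-1}$, and one has to choose the substitutions in the right order so that every cross term cancels and the coefficient of $T_{n+1}$ collapses to the clean $(n+1)^2$ demanded by \eqref{eq_diff_equa}. A cleaner route that sidesteps some of this is to express $T_{n+1}$, $T_{n+1}'$, $T_{n+1}''$ all through $U_n$ and $U_n'$ using \eqref{eq_2} and \eqref{eq_5}: from \eqref{eq_2} we have $T_{n+1}'=(n+1)U_n$, so $T_{n+1}''=(n+1)U_n'$, while \eqref{eq_5} gives $(n+1)T_{n+1}=XU_n-(1-X^2)U_n'$. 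Substituting these into $(1-X^2)T_{n+1}''-XT_{n+1}'+(n+1)^2T_{n+1}$ reduces the whole left-hand side to a combination of $U_n$ and $U_n'$, at which point the terms should visibly cancel. Either way, the argument is a finite manipulation of the identities already established, and no genuinely new idea is required beyond careful substitution; I would present whichever reduction produces the shortest cancellation.
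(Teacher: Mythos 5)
Your proposal is correct, but it proves the proposition by a genuinely different route than the paper. The paper argues by double induction: assuming \eqref{eq_diff_equa} for $T_n$ and $T_{n+1}$, it differentiates the recurrence $T_{n+2}=2XT_{n+1}-T_n$ twice, substitutes into the left-hand side for $T_{n+2}$, and after regrouping reduces the residue to $4\left[(1-X^2)T_{n+1}'+(n+1)(XT_{n+1}-T_n)\right]$, which vanishes by \eqref{eq_4}. Your second, ``cleaner'' route avoids induction on the differential equation altogether: writing $m=n+1$, the identities \eqref{eq_2} and \eqref{eq_5} give $T_{n+1}'=(n+1)U_n$, $T_{n+1}''=(n+1)U_n'$ and $(n+1)T_{n+1}=XU_n-(1-X^2)U_n'$, whence
\begin{align*}
(1-X^2)T_{n+1}''-XT_{n+1}'+(n+1)^2T_{n+1}
&=(n+1)(1-X^2)U_n'-(n+1)XU_n+(n+1)\left[XU_n-(1-X^2)U_n'\right]\\
&=0,
\end{align*}
and the case $m=0$ is trivial. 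This is a complete proof, with no circularity since Lemma \ref{lem_links_T_n_U_n} is established before and independently of the proposition. What each approach buys: yours exposes the differential equation as a purely algebraic consequence of the first-order identities (essentially eliminating $U_n$, $U_n'$ between \eqref{eq_2} and \eqref{eq_5}), and it is shorter; the paper's induction only invokes \eqref{eq_4} and keeps the same double-induction template used throughout the article, at the cost of a longer computation. Your first route (differentiating \eqref{eq_4} and eliminating $T_n'$) also works --- the needed identity boils down to $T_{n+1}=XU_n-U_{n-1}$, which follows from \eqref{eq_1} and the recurrence for $U$ --- but, as you anticipate, it requires more bookkeeping than the substitution via \eqref{eq_5}, so you should present the latter.
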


\begin{proof} We use double induction. The base step is clear. Assume that $T_n$ and $T_{n+1}$ satisfy \eqref{eq_diff_equa} for a certain nonnegative integer $n$. Since $T_{n+2}=2XT_{n+1}-T_n$,we have
	\[T_{n+2}'=2T_{n+1}+2XT_{n+1}'-T_n' \qquad \text{and} \qquad T_{n+2}''=4T_{n+1}'+2XT_{n+1}''-T_n''\]
	thus
	\begin{align*}
	(1-X^2)&T_{n+2}''-XT_{n+2}'+(n+2)^2T_{n+2} \\
	&=(1-X^2)(4T_{n+1}'+2XT_{n+1}''-T_n'')-X(2T_{n+1}+2XT_{n+1}'-T_n')+(n+2)^2(2XT_{n+1}-T_n) \\
	& 	=2X\underbrace{\left[(1-X^2)T_{n+1}''-XT_{n+1}'+(n+1)^2T_{n+1}\right]}_{=0}-\underbrace{\left[(1-X^2)T_n''-XT_n'+n^2T_n\right]}_{=0} \\
	&\hspace{3cm} +4(1-X^2)T_{n+1}'-2XT_{n+1}+[2(n+1)+1]2XT_{n+1}-(4n+4)T_n \\
	&=4\left[(1-X^2)T_{n+1}'+(n+1)(XT_{n+1}-T_n)\right].
	\end{align*}
Then, using \eqref{eq_4}, we conclude that $(1-X^2)T_{n+2}''-XT_{n+2}'+(n+2)^2T_{n+2}=0$. Hence, the property is true for $n+2$ and Proposition \ref{prop_diff_equa} is proved by double induction.
\end{proof}

\begin{prop} 	\label{prop_polynomial_expansions} For all $n\in\N^*$,
	\[T_n=\frac{n}{2}\sum_{k=0}^{\lfloor \frac{n}{2}\rfloor} \frac{(-1)^k2^{n-2k}}{n-k}\binom{n-k}{k} X^{n-2k} \qquad \text{and} \qquad U_n=\sum_{k=0}^{\lfloor \frac{n}{2}\rfloor} (-1)^k 2^{n-2k} \binom{n-k}{k} X^{n-2k}\]
\end{prop}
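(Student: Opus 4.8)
The plan is to establish the formula for $U_n$ first, by a two-step induction matching the recurrence $U_{n+2}=2XU_{n+1}-U_n$, and then to deduce the formula for $T_n$ from it using the relation \eqref{eq_1}. The two base cases $U_0=1$ and $U_1=2X$ are read off immediately from the proposed sum, so only the induction step is substantial.

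For that step I would fix $n$, assume the formula holds for both $U_n$ and $U_{n+1}$, and compare the coefficients of $X^{n+2-2k}$ on each side of $U_{n+2}=2XU_{n+1}-U_n$. Multiplying the formula for $U_{n+1}$ by $2X$ contributes $(-1)^k2^{n+2-2k}\binom{n+1-k}{k}$ (taking the summation index equal to $k$), while the shift $n-2i=n+2-2k$ forces $i=k-1$ in $U_n$ and, after the sign change from $-U_n$, contributes $(-1)^k2^{n+2-2k}\binom{n+1-k}{k-1}$. Their sum is
\[(-1)^k2^{n+2-2k}\left[\binom{n+1-k}{k}+\binom{n+1-k}{k-1}\right]=(-1)^k2^{n+2-2k}\binom{n+2-k}{k},\]
which is precisely the claimed coefficient of $U_{n+2}$. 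Thus the whole induction step collapses to a single application of Pascal's rule.

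To pass from $U_n$ to $T_n$ I would use \eqref{eq_1} in the form $T_{n+1}=U_{n+1}-XU_n$ and again extract the coefficient of $X^{n+1-2k}$. This yields $(-1)^k2^{n-2k}\left[2\binom{n+1-k}{k}-\binom{n-k}{k}\right]$, so the task reduces to the numerical identity
\[2\binom{n+1-k}{k}-\binom{n-k}{k}=\frac{n+1}{n+1-k}\binom{n+1-k}{k},\]
whose right-hand side is the coefficient read off from the proposed formula for $T_{n+1}$. Setting $m=n+1-k$ and using $\binom{m-1}{k}=\binom{m}{k}-\binom{m-1}{k-1}$ together with $\binom{m-1}{k-1}=\frac{k}{m}\binom{m}{k}$ collapses the left-hand side to $\frac{m+k}{m}\binom{m}{k}=\frac{n+1}{n+1-k}\binom{m}{k}$, as required.

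The genuine content here is light, two binomial identities, so the main obstacle is the bookkeeping at the boundaries of the sums. The upper limit $\lfloor n/2\rfloor$ jumps by one exactly when $n$ is even, so I would treat the extremal term separately, namely the constant term of $U_{n+2}$ when $n$ is even and its linear term when $n$ is odd, and likewise for $T_{n+1}$. There one checks that $2XU_{n+1}$ (respectively $XU_n$) either contributes nothing or supplies exactly the right low-degree term, and that binomial coefficients which appear only formally, such as $\binom{n+1-k}{k-1}$ at $k=0$, vanish by convention. Carrying out this verification separately for $n$ even and $n$ odd, and confirming that the leading coefficient $2^{n}$ matches at the top of each sum, disposes of the edge cases and completes both inductions.
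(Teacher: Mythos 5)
Your proof is correct, but it takes a genuinely different route from the paper's. The paper derives the expansion of $T_n$ from the differential equation \eqref{eq_diff_equa}: writing $T_n=\sum_k c_k X^{n-2k}$ (using the parity and degree of $T_n$), substituting into \eqref{eq_diff_equa} to get the first-order coefficient recurrence $c_k=-\frac{(n-2k+1)(n-2k+2)}{4k(n-k)}c_{k-1}$, solving it with the leading coefficient $c_0=2^{n-1}$, and only then reading off the expansion of $U_n$ from $U_n=\frac{1}{n+1}T_{n+1}'$, i.e.\ from \eqref{eq_2}. You work in the opposite order and without the differential equation at all: you verify the $U_n$ expansion by double induction directly on the defining recurrence (where, as you note, everything collapses to Pascal's rule, which also absorbs the boundary terms under the usual conventions $\binom{a}{-1}=0$ and $\binom{a}{b}=0$ for $b>a$), and then transfer it to $T_{n+1}$ through \eqref{eq_1} and the absorption identity $2\binom{n+1-k}{k}-\binom{n-k}{k}=\frac{n+1}{n+1-k}\binom{n+1-k}{k}$, which you justify correctly. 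What your route buys: Proposition \ref{prop_diff_equa} (whose own proof requires \eqref{eq_4}) becomes entirely unnecessary, since in the paper it serves only to feed this proof; your argument needs nothing beyond the defining recurrences and \eqref{eq_1}, so it is leaner and more self-contained. What the paper's route buys: it is a derivation rather than a verification --- the recurrence forced by the differential equation produces the closed form without one having to know it in advance --- and it follows the classical treatment the paper cites. Both arguments are purely algebraic, so either serves the stated aim of the article.
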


\begin{proof} Here, we follow \cite[p. 24-25]{Sny66}. Let $n$ be an positive integer and put $m=\lfloor \frac{n}{2} \rfloor$. Since $T_n$ is a polynomial in $\Z[X]$ of degree $n$ which has the same parity as $n$, we can write 
	\[T_n=\sum_{k=0}^{m} c_k X^{n-2k}\]
	where $c_k\in\Z$ for all $k\in\llbracket 0, m \rrbracket$.
	
	By substituting this expression in \eqref{eq_diff_equa}, one gets 
	\[(1-X^2)\sum_{k=0}^{m} (n-2k)(n-2k-1)c_k X^{n-2k-2} - X \sum_{k=0}^{m} (n-2k)c_k X^{n-2k-1} + n^2\sum_{k=0}^{m} c_k X^{n-2k} = 0\]
	i.e., by putting $c_{-1}=0$,
	\[\sum_{k=0}^{m} ((n-2k+2)(n-2k+1)c_{k-1}-(n-2k)(n-2k-1)c_k - (n-2k)c_k + n^2c_k) X^{n-2k} = 0\]
	One deduces that, for all $k\in\llbracket 0, m \rrbracket$,
	\[(n-2k+2)(n-2k+1)c_{k-1}+(n^2-(n-2k)^2)c_k=0\]
	and thus, for all $k\in\llbracket 1, m\rrbracket$
	\[c_k=-\frac{(n-2k+1)(n-2k+2)}{4k(n-k)}c_{k-1}.\]
	Hence, for all $k\in\llbracket 0, m \rrbracket$,
	\[c_k=\frac{(-1)^k}{4^k} c_0 \prod_{j=1}^{k} \dfrac{(n-2j+1)(n-2j+2)}{j(n-j)}=\frac{(-1)^k}{4^k}c_0\frac{n!(n-k-1)!}{k!(n-2k)!(n-1)!}.\]
	By an obvious double induction, we get that $c_0=2^{n-1}$ thus, for all $k\in\llbracket 0, m \rrbracket$,
	\[c_k=(-1)^k 2^{d-1-2k}\frac{d(d-k-1)!}{k!(d-2k)!}=\frac{d}{2} \frac{(-1)^k2^{d-2k}}{d-k} \binom{d-k}{k}\]
	as announced. Moreover, by \eqref{eq_2},
	\begin{align*}
	U_n&=\frac{1}{n+1}T_{n+1}'=\frac{1}{n+1}\cdot \frac{n+1}{2} \sum_{k=0}^{\lfloor \frac{n+1}{2}\rfloor} \frac{(-1)^k2^{n+1-2k}}{n+1-k}\binom{n+1-k}{k}(n+1-2k)X^{n-2k} \\
	&=\sum_{k=0}^{\lfloor \frac{n}{2}\rfloor} (-1)^k2^{n-2k}\frac{(n-k)!}{k!(n-2k)!}X^{n-2k}=\sum_{k=0}^{\lfloor \frac{n}{2}\rfloor} (-1)^k2^{n-2k} \binom{n-k}{k} X^{n-2k}
	\end{align*}
	because $\lfloor \frac{n+1}{2} \rfloor = \lfloor \frac{n}{2} \rfloor$ if $n$ is even and $n+1-2\lfloor \frac{n+1}{2} \rfloor=0$ if $n$ is odd.
\end{proof}

\begin{theo} \label{theo_irrationality} Let $n$ be a positive integer.
	\begin{enumerate}
		\item The roots of $T_n$ are all irrational if $n$ is even and $0$ is the only rational root of $T_n$ is $n$ is odd.
		\item The only possible rational roots of $U_n$ are $0$, $\frac{1}{2}$ and $-\frac{1}{2}$. Moreover, $0$ is a root of $U_n$ if and only if $n$ is odd and $\frac{1}{2}$ and $-\frac{1}{2}$ are roots of $U_n$ if and only if $n \equiv 2 \pmod{3}$.
	\end{enumerate}
\end{theo}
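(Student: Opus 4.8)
The plan is to apply the rational root theorem to the explicit polynomial expansions established in Proposition \ref{prop_polynomial_expansions}. The key observation is that a rational root of an integer polynomial, written in lowest terms as $\frac{p}{q}$, must have $p$ dividing the constant term and $q$ dividing the leading coefficient. For this I first need to identify these two coefficients from the closed forms. The leading coefficient (at $k=0$) is $2^{n-1}$ for $T_n$ and $2^n$ for $U_n$, so $q$ is a power of $2$. The constant term requires separating parities: when $n$ is odd, the polynomials are odd, so $0$ is automatically a root and I would factor it out; when $n$ is even, the constant term comes from the $k=\frac{n}{2}$ term of the sum.

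For part (1), the nonzero rational roots, written $\frac{p}{q}$ in lowest terms, must satisfy $p \mid (\text{constant term})$ and $q \mid 2^{n-1}$. First I would handle the case $n$ odd: since $T_n$ is odd, $0$ is a root, and I factor $T_n = X S(X)$ where $S$ is even; the roots of $S$ correspond to the nonzero roots of $T_n$, and $S(0)$ (the constant term of $S$, i.e.\ the coefficient of $X$ in $T_n$) together with the leading coefficient $2^{n-1}$ constrain any rational root. For $n$ even I work directly with $T_n$. In both cases the goal is to show that the candidate rationals $\frac{p}{q}$ with $q$ a power of $2$ cannot actually vanish; a clean way is a $2$-adic valuation argument showing that no dyadic rational in $(-1,1)$ can be a root, or alternatively substituting candidate values and deriving a contradiction from divisibility of the constant term by $p$.

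For part (2), I would apply the same rational root theorem to the expansion of $U_n$. The leading coefficient is $2^n$, so any rational root is of the form $\frac{p}{q}$ with $q \in \{1,2,4,\dots\}$ a power of $2$ and $p$ odd dividing the constant term. Since all roots lie in $(-1,1)$ by Theorem \ref{theo_roots_intervalle}, the only candidates with $|p/q|<1$ and $p$ odd, $q$ a power of two turn out to be $0$ and $\pm\frac{1}{2}$ (here I must argue that denominators $q \geq 4$ force $|p/q|$ too small to be a genuine root, or more carefully that the constant term is not divisible by enough powers of $2$ — this is the delicate point). The parity statement ``$0$ is a root iff $n$ is odd'' follows immediately from the parity of $U_n$. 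Finally, to pin down exactly when $\pm\frac{1}{2}$ are roots, I would evaluate $U_n(\tfrac12)$ directly: setting $w=\frac12$ in the recurrence gives $u_{n+2}=u_{n+1}-u_n$ with $u_0=1$, $u_1=1$, a periodic sequence of period $6$, from which $U_n(\tfrac12)=0$ precisely when $n\equiv 2\pmod 3$. The evenness or oddness of $U_n$ then transfers this to $-\frac12$.

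The main obstacle I anticipate is the denominator bound for $U_n$: ruling out rational roots with denominator $4$ or larger is not purely formal, since the constant term of $U_n$ can carry several factors of $2$. The cleanest route is probably to evaluate the recurrence at a generic candidate dyadic rational and extract the exact $2$-adic valuation, rather than to wrestle with the binomial coefficients in the explicit formula; the recurrence $u_{n+2}=2wu_{n+1}-u_n$ is better suited to valuation tracking than the closed form.
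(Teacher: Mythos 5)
Your overall strategy is viable, and it is genuinely different from the paper's, but as written part (1) has a hole. The $2$-adic valuation argument you propose (tracking $v_2$ through $t_{k+2}=2\alpha t_{k+1}-t_k$) does work, but only for denominators $q\geq 4$: writing $\alpha=p/2^a$ with $p$ odd, one shows by induction that $v_2(t_k)=k(1-a)-1$ for all $k\geq 1$, and the induction step requires $2(1-a)<0$, i.e.\ $a\geq 2$, so that the two terms $2\alpha t_{k+1}$ and $t_k$ have distinct valuations and cannot cancel. At $a=1$ the valuations coincide and cancellation can occur --- and it must be so, since $\pm\frac{1}{2}$ genuinely are roots of $U_n$ whenever $n\equiv 2\pmod 3$. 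So your claim that the valuation argument shows ``no dyadic rational in $(-1,1)$ can be a root'' is false, and after the valuation step and the interval bound have done their work, the candidates $\pm\frac{1}{2}$ for $T_n$ are still standing: nothing in your part (1) eliminates them. The fix is exactly the computation you already carry out for $U_n$: at $w=\frac{1}{2}$ the $T$-recurrence becomes $t_{k+2}=t_{k+1}-t_k$ with $t_0=1$, $t_1=\frac{1}{2}$, giving the $6$-periodic sequence $1,\frac{1}{2},-\frac{1}{2},-1,-\frac{1}{2},\frac{1}{2},\ldots$ which never vanishes; the parity of $T_n$ then transfers this to $-\frac{1}{2}$. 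You should also discard the two alternatives you float: ``denominators $q\geq 4$ force $|p/q|$ too small to be a genuine root'' has no basis (nothing in this algebraic setting bounds roots away from $0$), and the $2$-divisibility of the constant term constrains $p$, not $q$.

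Once repaired, your route is an instructive alternative to the paper's. The paper never invokes the rational root theorem or valuations: it observes that $2T_n(\frac{X}{2})$ is monic with integer coefficients, via the identity $\frac{n}{n-k}\binom{n-k}{k}=\binom{n-k}{k}+\binom{n-k-1}{k-1}$, so any rational root $\alpha$ of $T_n$ makes $2\alpha$ a rational algebraic integer, hence $2\alpha\in\Z$; combined with Theorem \ref{theo_roots_intervalle} this collapses the candidate set to $\{0,\pm\frac{1}{2}\}$ in one stroke, and the candidates are then tested with the closed form of Lemma \ref{lem_linear_rec} at the roots of unity $r=\mathrm{i}$ and $r=j$. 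Your periodicity test is an equivalent and arguably more elementary substitute for that last step (the period $6$ is just the order of the root of unity attached to $w=\frac{1}{2}$), and your valuation tracking is a heavier but self-contained substitute for the monic rescaling; in fact, if you run the valuation argument at every prime (for odd $\ell$ it works for all $a\geq 1$), you recover $2\alpha\in\Z$ directly from the recurrence and can bypass Proposition \ref{prop_polynomial_expansions} altogether, which the paper cannot do.
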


\begin{proof} Let us put $m=\lfloor \frac{n}{2} \rfloor$.
	 \begin{enumerate}
		\item Assume that $\alpha$ is a rational root of $T_n$. Thus, $2\alpha$ is a root of $2T_n(\frac{X}{2})$. But, according with Proposition \ref{prop_polynomial_expansions},
		\[2T_n\left(\frac{X}{2}\right)=\sum_{k=0}^{\lfloor \frac{n}{2}\rfloor} (-1)^k \frac{n}{n-k}\binom{n-k}{k} X^{n-2k}.\] 
		Note that, for all $k\in\llbracket 0, m\rrbracket$, $\frac{n}{n-k}\binom{n-k}{k}=\binom{n-k}{k}+\frac{k}{n-k}\binom{n-k}{k}=\binom{n-k}{k}+\binom{n-k-1}{k-1}$ so $2T_n(\frac{X}{2})$ has integral coefficients. Since its leading term is $1$, we deduce from a classic result about algebraic integers (see, for example, \cite[Corollary 6.14, p. 308]{NZM91}) that $2\alpha$ is an integer. Moreover, by Theorem \ref{theo_roots_intervalle}, $\alpha$ lies in $(-1,1)$ thus $\alpha\in\left\{0, -\frac{1}{2}, \frac{1}{2}\right\}$.
		
		Let us recall that by Lemma \ref{lem_linear_rec}, $T_n(\alpha)=\frac{r^{2d}+1}{2r^d}$ where $r$ is a root of $X^2-2\alpha X + 1$. If $\alpha=0$, we can take $r=\mathrm{i}$ and thus $T_n(\alpha)=-\frac{(-1)^n+1}{2}$ and we conclude that $0$ is a root of $T_n$ if and only if $n$ is odd. If $\alpha=-\frac{1}{2}$ then we can take $r=j:=-\frac{1}{2}+\mathrm{i}\frac{\sqrt{3}}{2}$. Therefore, $T_n(\alpha)=0$ if and only if $j^{2n}=-1$. But $j^3-1=(j-1)(j^2+r+1)=0$ thus $j^3=1$. It follows that the sequence $(j^d)_{d\in\N}$ is $3-$periodic and since $j^0=1$, $j=-\frac{1}{2}+\mathrm{i}\frac{\sqrt{3}}{2}$ and $j^2=-\frac{1}{2}-\mathrm{i}\frac{\sqrt{3}}{2}$ we conclude that $j^{2n} \neq -1$, i.e., $-\frac{1}{2}$ is not a root of $T_n$. Finally, due to the parity of $T_n$, $\frac{1}{2}$ is not a root of $T_n$ either.
		\item Assume that $\alpha$ is a rational root of $U_n$. By considering $U_n(\frac{X}{2})$, one proves similarly that $\alpha\in\{0,-\frac{1}{2},\frac{1}{2}\}$. The case $\alpha=0$ leads to the same conclusion. Nonetheless, the case $\alpha=-\frac{1}{2}$ is a bit different. Indeed, one has $U_n(\alpha)=\frac{j^{2n+2}-1}{j^n(j^2-1)}$ and, according to the periodic values of the sequence $(j^d)$, one sees that $U_n(\alpha)=0$ if and only if $3$ divides $2n+2$ that is to say $n \equiv 2 \pmod{3}$. The same argument of parity concludes the proof.
	\end{enumerate}
\end{proof}

The previous proof depends largely on the result of section 2 and especially on Theorem \ref{theo_roots_intervalle} which ensures that $s\in\{0,-1,1\}$. In fact, there is a way to derive the irrationality of non-zero roots of $T_n$ by using only the result of the present section. The proof is based on the shifted Chebyshev polynomials.

\begin{defi} For all nonnegative integer $n$, one defines the shifted Chebyshev polynomials (of first kind) $T_n^*$ by:
	\[T_n^*=T_n(2X-1).\]
\end{defi}

\begin{prop} For all nonnegative integer $n$, $T_n^*(X^2)=T_{2n}$.
	\label{prop_like_shifted_polynomials}
\end{prop}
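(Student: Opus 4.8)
The plan is to exhibit both sides as solutions of the same linear recurrence with the same two initial terms, and then invoke the uniqueness of such recursive sequences. Spelling out the claim, we must show that $T_n(2X^2-1)=T_{2n}$ for every $n\in\N$.

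First I would establish an auxiliary ``two-step'' recurrence for the Chebyshev polynomials themselves: for all integers $k\geqslant 2$,
\[T_{k+2}=(4X^2-2)T_k-T_{k-2}.\]
This is obtained by applying the defining relation twice: from $T_{k+2}=2XT_{k+1}-T_k$ and $T_{k+1}=2XT_k-T_{k-1}$ one gets $T_{k+2}=4X^2T_k-2XT_{k-1}-T_k$, and then eliminating the odd-indexed term through $2XT_{k-1}=T_k+T_{k-2}$, which is just the recurrence $T_k=2XT_{k-1}-T_{k-2}$ rearranged. Setting $k=2n+2$ shows that the sequence $(T_{2n})_{n\in\N}$ obeys $T_{2(n+2)}=(4X^2-2)T_{2(n+1)}-T_{2n}$, with initial terms $T_0=1$ and $T_2=2X^2-1$.

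On the other side, I would put $S_n=T_n^*(X^2)=T_n(2X^2-1)$. Since $T_{n+2}=2XT_{n+1}-T_n$ holds as an identity in $\Z[X]$, substituting $X\mapsto 2X^2-1$ gives $S_{n+2}=2(2X^2-1)S_{n+1}-S_n=(4X^2-2)S_{n+1}-S_n$, while $S_0=T_0(2X^2-1)=1$ and $S_1=T_1(2X^2-1)=2X^2-1$. Thus $(S_n)$ and $(T_{2n})$ satisfy the very same recurrence and agree at $n=0$ and $n=1$, so a straightforward double induction yields $S_n=T_{2n}$ for all $n$, which is precisely $T_n^*(X^2)=T_{2n}$.

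I expect no serious obstacle here: the only mildly delicate point is the derivation of the two-step recurrence, but it reduces to applying the defining relation twice together with a single rearrangement, all carried out within $\Z[X]$, in keeping with the purely algebraic spirit of the paper.
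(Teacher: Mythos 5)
Your proof is correct and follows essentially the same route as the paper: both arguments come down to a double induction driven by the identity $T_{2n+4}=(4X^2-2)T_{2n+2}-T_{2n}$, which the paper verifies inline within its induction step (passing through the odd-indexed terms $T_{2n+1}$, $T_{2n+3}$) and which you isolate as a standalone two-step recurrence before matching initial terms. The only difference is organizational—your version packages the same computation as ``two sequences satisfying the same recurrence with the same first two terms coincide''—and it is equally valid.
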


\begin{proof} One uses double induction. The result is clear if $n=0$ and $n=1$.  Assume the relation is true for $n$ and $n+1$. Then,  
	\begin{align*}
	T_{n+2}^*(X^2)&=T_{n+2}(2X^2-1)=2(2X^2-1)T_{n+1}(2X^2-1)-T_n(2X^2-1)\\
	&=2(2X^2-1)T_{n+1}^*(X^2)-T_{n}^*(X^2) = 2(2X^2-1)T_{2n+2}-T_{2n} \\
	&=4X^2T_{2n+2}-(T_{2n+2}+T_{2n})-T_{2n+2}=4X^2T_{2n+2}-2XT_{2n+1}-T_{2n+2} \\
	&=2X(2XT_{2n+2}-T_{2n+1})-T_{2n+2} =2XT_{2n+3}-T_{2n+2} \\
	&=T_{2n+4}.
	\end{align*}
	Hence, the relation is true for $n+2$ and Proposition \ref{prop_like_shifted_polynomials} is proved by induction.
\end{proof}

\begin{theo} Let $n$ be an positive integer. The only possible rational root of $T_n^*$ is $\frac{1}{2}$. 
\end{theo}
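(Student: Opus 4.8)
The plan is to cut down to finitely many candidate roots by the rational root theorem together with the monic companion polynomial already used in the proof of Theorem \ref{theo_irrationality}, and then to eliminate every candidate except $\tfrac12$ by running the defining recurrence at the corresponding point. First note that, since $T_n\in\Z[X]$ and $2X-1\in\Z[X]$, the shifted polynomial $T_n^*=T_n(2X-1)$ again lies in $\Z[X]$. By Proposition \ref{prop_polynomial_expansions} the leading coefficient of $T_n$ is $2^{n-1}$, so that of $T_n^*$ is $2^{n-1}\cdot 2^n=2^{2n-1}$; and its constant term is $T_n^*(0)=T_n(-1)=(-1)^nT_n(1)=(-1)^n$ by Lemma \ref{lem_value_in_1} and the parity of $T_n$. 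Hence, by the rational root theorem, every rational root $\alpha$ of $T_n^*$ has the form $\alpha=\pm 2^{-a}$ with $0\leqslant a\leqslant 2n-1$.

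To cap the denominator I would reuse the monic polynomial $S_n:=2T_n(X/2)\in\Z[X]$, which was checked to be monic with integer coefficients in the proof of Theorem \ref{theo_irrationality}. Writing $T_n(Z)=\tfrac12 S_n(2Z)$ yields the identity $T_n^*(X)=\tfrac12 S_n(4X-2)$. Thus if $\alpha$ is a rational root of $T_n^*$, then $4\alpha-2$ is a rational root of the monic integer polynomial $S_n$, hence an integer by the same result on algebraic integers invoked in the proof of Theorem \ref{theo_irrationality}. Therefore $4\cdot 2^{-a}=2^{2-a}\in\Z$, which forces $a\leqslant 2$. Combining the two constraints leaves only the six candidates $\alpha\in\{\pm 1,\pm\tfrac12,\pm\tfrac14\}$.

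It then remains to test each candidate through $T_n^*(\alpha)=T_n(2\alpha-1)$; the six shifted arguments are $2\alpha-1\in\{1,-3,0,-2,-\tfrac12,-\tfrac32\}$. For each such real $w$ I would set $t_k:=T_k(w)$ and read the sequence off from $t_0=1$, $t_1=w$, $t_{k+2}=2wt_{k+1}-t_k$, without any appeal to Section 2. When $|w|\geqslant 1$ a one-line induction shows the sequence never vanishes: $T_k(1)=1$, and for $|w|>1$ the quantity $|t_k|$ is strictly increasing (for $w>1$ one checks $t_{k+2}>t_{k+1}\geqslant 1$, and $w<-1$ follows by parity), which disposes of $w\in\{1,-3,-2,-\tfrac32\}$. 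The last two arguments are handled by periodicity: for $w=0$ the sequence is $1,0,-1,0,\dots$, so $T_n(0)=0$ exactly when $n$ is odd, and for $w=-\tfrac12$ it is the $3$-periodic sequence $1,-\tfrac12,-\tfrac12,\dots$, which never vanishes. Hence the only candidate that can annihilate $T_n^*$ is $\alpha=\tfrac12$ (a genuine root precisely when $n$ is odd), which is the assertion.

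I expect the only delicate point to be this elimination step, since naively showing $T_n(w)\neq 0$ for $|w|>1$ looks like re-proving the localization of the zeros from Section 2. The resolution is that no general localization theorem is needed: after the rational root theorem and the denominator bound only finitely many explicit values of $w$ survive, and at each of them the non-vanishing of $T_n(w)$ follows from an elementary monotonicity or periodicity property of the concrete integer (or half-integer) sequence produced by the recurrence, staying entirely within the algebraic framework of the present section. One could instead quote Theorem \ref{theo_irrationality} directly, since $2\alpha-1$ would then be a rational, hence zero, root of $T_n$; but that is exactly the dependence on Section 2 that the shifted-polynomial argument is meant to avoid.
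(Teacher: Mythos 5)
Your proof is correct, and it reaches the statement by a genuinely different route than the paper. The paper computes the coefficients of the shifted polynomial explicitly: combining Propositions \ref{prop_polynomial_expansions} and \ref{prop_like_shifted_polynomials}, it shows that $2T_n^*\left(\frac{X}{4}\right)$ is monic with integer coefficients and constant term $\pm 2$, so any rational root $\alpha$ of $T_n^*$ has $4\alpha$ an integer dividing $2$; the alternating signs of the coefficients then force $\alpha>0$, leaving $\alpha\in\left\{\frac{1}{4},\frac{1}{2}\right\}$, and finally the parity of $T_n$ (which makes $1-\alpha$ a root whenever $\alpha$ is) excludes $\frac{1}{4}$, since $\frac{3}{4}$ is not among the candidates. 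You bypass Proposition \ref{prop_like_shifted_polynomials} and all the binomial identities: the rational root theorem applied directly to $T_n^*\in\Z[X]$ (leading coefficient $2^{2n-1}$, constant term $(-1)^n$) gives $\alpha=\pm 2^{-a}$, the identity $2T_n^*(X)=S_n(4X-2)$ with $S_n=2T_n(X/2)$ monic (already checked in the proof of Theorem \ref{theo_irrationality}) caps $a\leqslant 2$, and the six surviving candidates are eliminated one by one by running the recurrence at $w=2\alpha-1$: constancy at $w=1$, strict growth plus parity for $w<-1$, four-periodicity at $w=0$ and three-periodicity at $w=-\frac{1}{2}$. Both arguments rest on the same algebraic-integer lemma, and both keep the promise of independence from Section 2 --- your finite case checks are not a hidden re-proof of Theorem \ref{theo_roots_intervalle}, as you rightly point out. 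The trade-off is clear: the paper's symmetry trick dispatches $\frac{1}{4}$ in one line but requires the heavier machinery of Proposition \ref{prop_like_shifted_polynomials}, whereas your case analysis is longer but needs only the leading and constant terms of $T_n^*$; as a small bonus, your evaluation at $w=0$ also shows that $\frac{1}{2}$ is genuinely a root exactly when $n$ is odd.
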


\begin{proof} It follows from Propositions \ref{prop_polynomial_expansions} and \ref{prop_like_shifted_polynomials} that 
	\[T_n^*(X^2)=n\sum_{k=0}^{n} \frac{(-1)^k2^{2n-2k}}{2n-k}\binom{2n-k}{k}X^{2n-2k}=n\sum_{k=0}^{n} \frac{(-1)^{n-k}2^{2k}}{n+k}\binom{n+k}{2k}(X^2)^{k}\]
	thus
	\[2T_n^*\left(\frac{X}{4}\right)=\sum_{k=0}^{n} (-1)^{n-k}\frac{2n}{n+k}\binom{n+k}{2k}X^{k} = \sum_{k=0}^{n} (-1)^{n-k}\left[2\binom{n+k}{n-k}-\binom{n+k-1}{n-k}\right]X^{k}.\]
	Hence, $2T_n^*(\frac{X}{4})$ has integral coefficients. Assume that $\alpha$ is a rational root of $T_n^*$. Note that the terms of $T_n^*$ alternate in sign and the constant term of $T_n^*$ is non-zero so $\alpha >0$. Since $2T_n^*(\frac{X}{4})$ is a monic polynomial and has constant term $2$, the same argument that for $T_n$ yields that $4\alpha$ is an integer dividing $2$ so $\alpha\in\{\frac{1}{2}, \frac{1}{4}\}$. Moreover, due to the parity of $T_n$, $1-\alpha$ is also a rational root of $T_n^*$ so we conclude that the only possible rational root of $T_n^*$ is $\frac{1}{2}$.  
\end{proof}

As an immediate corollary of the previous theorem, we deduce that $T_n$ has no rational root if $n$ is even and $0$ is the only rational root of $T_n$ is $n$ is odd.

\bibliographystyle{amsalpha}
\bibliography{biblio}

\providecommand{\bysame}{\leavevmode\hbox to3em{\hrulefill}\thinspace}
\providecommand{\MR}{\relax\ifhmode\unskip\space\fi MR }
\providecommand{\MRhref}[2]{%
  \href{http://www.ams.org/mathscinet-getitem?mr=#1}{#2}
}
\providecommand{\href}[2]{#2}
\begin{thebibliography}{{Pon}18}

\bibitem[MH03]{MH03}
J.~C. Mason and D.~C. Handscomb, \emph{Chebyshev polynomials}, Chapman \&
  Hall/CRC, Boca Raton, FL, 2003.

\bibitem[NZM91]{NZM91}
I.~Niven, H.~S. Zuckerman, and H.~L. Montgomery, \emph{An {I}ntroduction to the
  {T}heory of {N}umbers}, 5$^\text{e}$ ed., John Wiley \& Sons, Inc., 1991.

\bibitem[{Pon}18]{Pon18}
L.~{Ponton}, \emph{{Sur l'irrationalité des racines de certains polynômes
  usuels (I)}}, {R.M.S.} \textbf{129-1} (2018), 26--47.

\bibitem[Sny66]{Sny66}
M.~A. Snyder, \emph{Chebyshev methods in numerical approximation},
  Prentice-Hall, Inc., Englewood Cliffs, N.J., 1966.

\end{thebibliography}

\end{document}